\theoremstyle{plain}
\newtheorem{theorem}{Theorem}
\theoremstyle{definition}
\newcommand{\keywords}{\textbf{Key words. }\medskip}
\newcommand{\subjclass}{\textbf{MSC 2010. }\medskip}
\renewcommand{\abstract}{\textbf{Abstract. }\medskip}
\begin{document}

\sloppy

\title{Asymptotic estimates for the best uniform approximations of classes of convolution of periodic functions of high smoothness
\thanks{This work was partially supported by the Grant H2020-MSCA-RISE-2019, project number 873071 (SOMPATY: Spectral Optimization: From Mathematics to Physics and Advanced Technology).}}

\author{A.S.~Serdyuk and I.V.~Sokolenko}



\date{\ }

\maketitle

\begin{flushright}
	\textit{Dedicated to the memory of \\ Professor S.B.~Stechkin and Professor S.A.~Telyakovskii.}
\end{flushright}

\begin{abstract}
We find two-sides estimates for the best uniform approximations of classes of convolutions of $2\pi$-periodic functions from unit ball of the space  $L_p, 1 \le p <\infty,$ with fixed kernels,  modules of Fourier coefficients of which satisfy the condition $\sum\limits_{k=n+1}^\infty\psi(k)<\psi(n).$  In the case of $\sum\limits_{k=n+1}^\infty\psi(k)=o(1)\psi(n)$ the obtained estimates become the asymptotic equalities.
\end{abstract}

\subjclass{42A10}

\keywords{Best approximation, Fourier sum, Weyl-Nagy class, $(\psi,\bar{\beta})$-integral, 
	asymptotic equality, Kolmogorov-Nikol'skii problem.}


Let   $L_{p}$,
$1\le  p<\infty$, be the space of $2\pi$--periodic functions $f$ sum\-mable to the power $p$ on  $[-\pi,\pi)$, in which the norm is given by the formula
$$
\|f\|_{L_p}=\|f\|_{p}=\bigg(\int\limits_{-\pi}^{\pi}|f(t)|^pdt\bigg)^{1/p},
$$
 $L_{\infty}$ be the space of measurable and essentially bounded   $2\pi$--periodic functions  $f$ with the norm
$$
\|f\|_{L_\infty}=\|f\|_{\infty}=\mathop {\rm ess \sup}\limits_{t} |f(t)|,
$$
and $C$ be the space of continuous $2\pi$--periodic functions  $f$, in which the norm is defined by the equality
$$
{\|f\|_{C}=\max\limits_{t}|f(t)|}.
$$

Denote by $C^\psi_{\bar\beta,p},\ 1\le p\le\infty,$ the set of all $2\pi$-periodic functions $f$,
	representable as convolution 
\begin{equation}\label{25_7'}
f(x)=\frac{a_0}{2}+\frac{1}{\pi}\int\limits_{-\pi}^{\pi}
\varphi(x-t) \Psi_{\bar\beta}(t)dt, \ \ \ a_0\in\mathbb R, \ \ \ \varphi\in B_p^0,
\end{equation}
$$
B_p^0=\{\varphi\in L_p:\ \|\varphi\|_p\le1,\ \varphi\perp1\},
$$
with a fixed generated kernel $\Psi_{\bar\beta}\in L_{p'},\  1/p+ 1/{p'}=1,\ $ the Fourier series of which has the form
\begin{equation}\label{1*}
S[\Psi_{\bar{\beta}}](t)=\sum\limits_{k=1}^\infty \psi(k)\cos\left(kt-\frac{\beta_k\pi}2\right),\quad \beta_k\in\mathbb{R},\quad \psi(k)\ge0.
\end{equation}
A function $f$ in the representation (\ref{25_7'}) is called $(\psi,\bar{\beta})$-integral of the function $\varphi$ and  is denoted by ${\cal J}^{ \psi}_{\bar{\beta}}\varphi$  $(f={\cal J}^{ \psi}_{\bar{\beta}}\varphi)$. If $\psi(k)\neq0,\ k\in\mathbb{N},$ then the function $\varphi$ in the representation  (\ref{25_7'}) is called $(\psi,\bar{\beta})$-derivative of the function $f$ is denoted by $f^{ \psi}_{\bar{\beta}}$ $(\varphi=f^{ \psi}_{\bar{\beta}})$. The concepts of  $(\psi,\bar{\beta})$-integral and $(\psi,\bar{\beta})$-derivative  was introduced by Stepanets (see, e.g., \cite{Stepanets1987,Stepanets2005}). 
Since \mbox{$\varphi\in L_p$} and $\ \Psi_{\bar\beta}\in L_{p'},$ then (see. \cite[Proposition 3.9.2.]{Stepanets2005}) the function $f$ of the form \eqref{25_7'} is a continuous function, i.e. $C^\psi_{\bar\beta,p}\subset C.$

In the case $\beta_k\equiv\beta,\ \beta\in \mathbb R,$ the classes $C^\psi_{\bar\beta,p}$ are denoted by $C^\psi_{\beta,p}$. 

For $\psi(k)=k^{-r}, r>0, $ the classes  $C^\psi_{\bar\beta,p}$ та $C^\psi_{\beta,p}$ are denoted by $W^r_{\bar\beta,p}$ and $W^r_{\beta,p}$, respectively. The $W^r_{\beta,p}$ are  the well-known  Weyl-Nagy classes (see, e.g., \cite{Sz.-Nagy1938, Stechkin1956_2, Stepanets1987, Stepanets2005}). In other words $W^r_{\beta,p}, 1\le p\le \infty,$ are the classes of  $2\pi$-periodic functions $f$, representable
as convolutions of the form
\begin{equation}\label{1}
f(x)=\frac{a_0}{2}+\frac{1}{\pi}\int\limits_{-\pi}^{\pi}
\varphi(x-t) B_{r,\beta}(t)dt, \ \ \ a_0\in\mathbb R, 
\end{equation}
the Weyl-Nagy kernels $B_{r,\beta}(\cdot)$ of the form
\begin{equation}\label{2*}
B_{r,\beta}(t)=\sum\limits_{k=1}^\infty k^{-r}\cos\left(kt-\frac{\beta\pi}2\right),\quad r>0,\quad \beta\in\mathbb{R},
\end{equation}
with functions $\varphi \in B_p^0$. The function $\varphi$ in the formula \eqref{1} is called  the Weyl-Nagy derivative of the function $f$ and is denoted by $f_\beta^r$.

If  $r\in\mathbb N$ and $ \beta=r,\ $ then the functions $B_{r,\beta}(\cdot)$ of the form (\ref{2*})  are  the well-known Bernoulli kernels and  the corresponding classes $W^r_{\beta,p}$ coincide with the well-known classes $W^r_{p}$, which consist of $2\pi$-periodic functions $f$  with absolutely continuous derivatives $f^{(k)}$ up to $(r-1)$-th order inclusive and such that  $\|f^{(r)}\|_p\le1$. In addition,   for almost everywhere $x\in\mathbb{R} \ \ f^{(r)}(x)=f_r^r(x)=\varphi(x),\ $  where $\varphi$ is the function from (\ref{1}).

For $\psi(k)=e^{-\alpha k^{r}},\ \alpha>0, \ r>0, $   the classes  $C^\psi_{\bar\beta,p}$ are denoted by $C^{\alpha,r}_{\bar\beta,p}.$  In the case of $r=1$, $\beta_k\equiv\beta,\ \beta\in \mathbb R,$ and $p=\infty$ the sets $C^{\alpha,r}_{\bar\beta,p}$ are  well-known classes of the Poisson integrals  $C^{\alpha,1}_{\beta,\infty}$ (see, e.g., \cite{Stechkin1980, Stepanets1987, Stepanets2005}).

If $f\in C$  by $E_n(f)_C$ we denote the best uniform approximation of the function $f$ by elements of the subspace ${\cal T}_{2n-1}$ of trigonometric polynomials $T_{n-1}$ of the order $n-1$: 
$$
T_{n-1}(x)=\frac{\alpha_0}{2}+\sum\limits_{k=1}^{n-1}
(\alpha_k\cos kx+\beta_k\sin kx),\ \ \  \alpha_k,\beta_k\in \mathbb{R}.
$$

Let $\mathfrak N$ be the some functional class from the space $C$ $(\mathfrak N\subset C)$. Then the quantity  
\begin{equation}\label{15_3'}
E_n(\mathfrak N)_C=\sup\limits_{f\in \mathfrak N} E_n(f)_C= \sup\limits_{f\in \mathfrak N}\inf\limits_{T_{n-1}\in{\cal T}_{2n-1}} \|f(\cdot)-T_{n-1}(\cdot)\|_C
\end{equation}
is called the best uniform approximation of the class $\mathfrak{N}$ by elements of the
subspace ${\cal T}_{2n-1}$ of trigonometric polynomials $T_{n-1}$ of the order $n-1$.

At present, the exact values for the quantities of the form  \eqref{15_3'} are known  for important functional classes $\mathfrak{N}$. In particular, thanks to the articles of  Favard \cite{Favard_1936, Favard_1937}, Akhiezer and Krein \cite{Akhiezer_Krein1937}, Nikol'skii \cite{Nikol'skii1946}, Dzyadyk \cite{Dzyadyk_1959,Dzyadyk_1974}, Stechkin \cite{Stechkin1956_2} 
and  Sun \cite{Sun_1961} the exact values of the best uniform approximations of the Weyl-Nagy classes $W_{\beta, \infty}^r$ are found  for arbitrary $ r>0$ and $\beta \in \mathbb{R}.$

For the classes of the Poisson integrals $C^{\alpha,1}_{\beta,\infty}$ the exact values of the form \eqref{15_3'} are also known for all $\alpha>0$ and $\beta\in\mathbb{R}$  thanks to the articles of Krein \cite{Krein_1938}, Bushanskij \cite{Bushanskij} and Shevaldin \cite{Shevaldin_1992} (see also \cite{Stepanets2005,Baraboshkina2011}). 

The exact values of the best approximations $E_n(\mathfrak N)_C$ were obtained in a number of other cases (see, e.g.,  \cite{Pinkus1985,Serdyuk1995,Serdyuk1998,Serdyuk1999,Serdyuk_2002_zb,Serdyuk_2005_7,Stepanets2005}).

In the general case, the problem of finding of the exact values of the best uniform approximations of the classes  $C^\psi_{\bar\beta,p}$ for $ 1\le p\le\infty$ remains open, and therefore, the investigation of the asymptotic behavior of the quantities $E_n(C^\psi_{\bar\beta,p})_C$  as $n\rightarrow\infty$ is relevant. 

In this paper we investigate the problem of finding of the asymptotic equalities for the quantities \eqref{15_3'} as $n\rightarrow\infty$ in the case, when the classes $\mathfrak N$ are the classes $C^\psi_{\bar\beta,p},\ 1\le p\le\infty,$ and the sequences $\psi(k)$ decrease to zero very rapidly, in particular, when
\begin{equation}\label{15_4'}
\sum\limits_{k=n+1}^\infty \psi(k)=o(1)\psi(n).
\end{equation}

This work can be considered a continuation of the authors' research  \cite{Serdyuk_2004,Serdyuk_Sokolenko2011,Serdyuk_Sokolenko2016}, in which the asymptotics of the best uniform approximations of classes of the generalized $(\psi,\bar{\beta})$-integrals were investigated.

Note that in the case of $p=\infty$ the asymptotic equalities and even the exact values of the quantities  $E_{n}(C_{\bar\beta,\infty}^\psi)_{C}$ are known  under certain restrictions on $\psi(k)$  (see, e.g.,  \cite{Serdyuk_Stepanets2000_03,Serdyuk_2005_7}).

For a fixed $\mathfrak N\subset C$    denote by ${\cal E}_n(\mathfrak N)_C$  the quantity
\begin{equation}\label{15_4}
{\cal E}_n(\mathfrak N)_C=\sup\limits_{f\in \mathfrak N}\|f(\cdot)-S_{n-1}(f;\cdot)\|_C,
\end{equation}
where    $S_{n-1}(f;\cdot)$ is the partial Fourier sum of order $n-1$ of the function $f$.

Since
\begin{equation}\label{15_5}
E_n(\mathfrak N)_C\le{\cal E}_n(\mathfrak N)_C, \quad \mathfrak{N}\subset C,
\end{equation}
then the quantities \eqref{15_4} are naturally used for upper bounds for the best approximations of the classes $\mathfrak{N}.$

The problem of finding of the asymptotic equalities for the quantities of the form \eqref{15_4} as $n\rightarrow\infty$ is called the Kolmogorov–Nikol’skii problem for the Fourier sums. The Kolmogorov–Nikol’skii problem has a rich history. Reviews on the history of this problem
can be found e.g. in the monographs \cite{Stepanets1987,Stepanets2005}.

For characteristics of the form (\ref{15_4}) on the Weyl-Nagy classes $W^r_{\beta,\infty}$ $(\mathfrak  N=W^r_{\beta,\infty})$ the following asymptotic formula holds 
\begin{equation}\label{25_4}
{\cal E}_{n}(W^r_{\beta,\infty})_{C}=\frac4\pi\frac{\ln n}{n^rзнам}+{\cal O}\left(\frac1{n^r}\right),\quad r>0,\quad\beta\in\mathbb R.
\end{equation}
For $r\in\mathbb{N}$ and $\beta=r$   this estimate was obtained by Kolmogorov \cite{Kolmogorov1985}, for arbitrary $r>0$  by Pinkevich \cite{Pinkevich1940} and Nikol'skii \cite{Nikol'skii1941}. In the general case the estimate (\ref{25_4}) follows from results of Efimov \cite{Efimov1960} and Telyakovskii \cite{Telyakovskii1961}. 

In these works the parameters $r$ and $\beta$ of the Weyl-Nagy classes were assumed to be
fixed, and the question about the dependence of the remainder term in the estimates (\ref{25_4}) on these parameters was not considered.
 The character of the dependence on $r$ and $\beta$ of the remainder term in the estimate (\ref{25_4}) was
 investigated by Sokolov, Selivanova, Natanson, Telyakovskii, Stechkin and other authors  (see  \cite{Stechkin1980, Telyakovskii1989, Telyakovskii1994, Serdyuk_Sokolenko2019} and the references therein).

In the work of Stechkin \cite{Stechkin1980} the asymptotic behavior of the quantities ${\cal E}_{n}(W^r_{\beta,\infty})_{C}$  was completely investigated as $n\rightarrow\infty$ and $r\rightarrow\infty$. Namely, he proved that for
arbitrary $r\ge1$ and $\beta\in\mathbb{R}$ the following equality takes place
\begin{equation}\label{25_6}
{\cal E}_{n}(W^r_{\beta,\infty})_{C}=\frac1{n^r}\left(\frac8{\pi^2}\mathbf{K}(e^{-r/n})+{\cal O}(1)\frac1r\right),
\end{equation}
where
\begin{equation}\label{25_7}
\mathbf{K}(q)=\int\limits_{0}^{\pi/2}\frac{dt}{\sqrt{1-q^2\sin^2t}}
\end{equation}
is a complete elliptic integral of the first kind, and ${\cal O}(1)$ is a quantity uniformly bounded
with respect to $r, n$ and $\beta$.

In addition,  Stechkin \cite[Theorem 4]{Stechkin1980} proved that for rapidly growing $r$ the remainder in the equality (\ref{25_6}) can be improved. Namely, for arbitrary $r\ge n+1$ and $\beta\in\mathbb{R}$ the following formula holds
\begin{equation}\label{25_8}
{\cal E}_{n}(W^r_{\beta,\infty})_{C}=
\frac1{n^r}\left(\frac4{\pi} +{\cal O}(1)\left(1+\frac1n\right)^{-r} \right),
\end{equation}
where ${\cal O}(1)$ is a quantity uniformly bounded with respect to $r, n$ and $\beta$. If $r/n\rightarrow\infty$ then the estimate (\ref{25_8}) becomes the asymptotic equality.

In the works of Telyakovskii \cite{Telyakovskii1989,Telyakovskii1994} it was shown  that  the second term in the formula (\ref{25_8}) can be replaced by a smaller one, namely, we can write $\displaystyle {\cal O}(1)(1+ 2/n)^{-r}$ instead of $\displaystyle {\cal O}(1)(1+ 1/n)^{-r}$, and it s also the estimate (\ref{25_8}) sharper by separating out the following terms of the asymptotics.

In the work of authors \cite{Serdyuk_Sokolenko2019}, in particular, the formula of Stechkin  \eqref{25_8}  was generalized on classes $W^r_{\bar\beta,p}.$  Namely, it is proved that if $\ 1 \le p \le \infty,\ n\in\mathbb{N}$ and $\ \bar{\beta}=\{\beta_k\}_{k=1}^\infty$ is arbitrary sequence of real numbers, then for $r\ge n+1$ the following estimate holds	
\begin{equation}\label{06_1t1}
{\cal E}_{n}(W^r_{\bar\beta,p})_{C}=
n^{-r}\Bigg(\frac{\|\cos t\|_{p'}}{\pi} +{\cal O}(1)  \bigg(1+\frac1n\bigg)^{-r} \Bigg), \ \ \ \frac1p+\frac1{p'}=1,
\end{equation}
where ${\cal O}(1)$ is a quantity uniformly bounded in all parameters.

For  the classes of the generalized Poisson integrals $C_{\beta,p}^{\alpha,r}$ and for all fixed $\alpha>0, r>0, \beta\in\mathbb{R}$ and $1\le p\le\infty$ the asymptotic equalities for the  quantities \eqref{15_4} as $n\rightarrow\infty$ are known due to works of  Nikol'skii \cite{Nikol'skii1946}, Stechkin \cite{Stechkin1980}, Stepanets \cite{Stepanets1987,Stepanets2005}, Telyakovskii \cite{Telyakovskii1989,Telyakovskii1994}, Serdyuk and Stepanyuk  \cite{Serdyuk_Stepanyuk2019}. 

As for the classes $C_{\bar\beta,p}^\psi$, for the rapidly decreasing sequences $\psi(k)$ (in particular, when the condition \eqref{15_4'} is satisfied) the asymptotics of the quantities  ${\cal E}_{n}(C_{\bar\beta,p}^\psi)_{C}$  are known for all $1\le p\le\infty $ and $\beta_k\in\mathbb{R}$ (see \cite{Stepanets1987,Stepanets2005,Telyakovskii1989,Telyakovskii1994,Serdyuk_2005_8,Serdyuk_Stepanyuk2019}). Therefore, the main goal of this work is focused on finding lower estimates for the best approximations $E_{n}(C_{\bar\beta,p}^\psi)_{C}, 1\le p\le\infty.$ 

In present paper we find two-sides estimates for the quantities $E_{n}(C_{\bar\beta,p}^\psi)_{C}$ and ${\cal E}_{n}(C_{\bar\beta,p}^\psi)_{C}$, from which, in particular, it follows that under condition \eqref{15_4'}
\begin{equation}\label{15_6}
E_{n}(C_{\bar\beta,p}^\psi)_{C}\sim{\cal E}_{n}(C_{\bar\beta,p}^\psi)_{C}\sim\frac{\|\cos t\|_{p'}}{\pi}\psi(n),
\end{equation}
where $1\le p\le\infty,\ $  $\beta_k\in\mathbb{R}$, $\displaystyle\frac1p+\frac1{p'}=1,\ $  and 
$A(n){\sim} B(n)$ as $n\rightarrow\infty$ means that $\lim\limits_{n\rightarrow\infty} {A(n)}/{B(n)}{=}1.$

It should be noted that the asymptotic equality \eqref{15_6}  is a manifestation of the high-smoothness effect, which occurs when the first harmonic in the remainder of the Fourier series after the $(n-1)$-th member of the generating kernels $\Psi_{\bar{\beta}}(t)$  of the form \eqref{1*} is a dominant in estimating of the $L_{p'}$-norm of the specified remainder of the series.
When the sequences  $\psi(k)$ decrease to zero not so fast,the equalities $E_{n}(C_{\bar\beta,p}^\psi)_{C}$ and ${\cal E}_{n}(C_{\bar\beta,p}^\psi)_{C}$ for $1\le p<\infty$ do not asymptotically coincide with each other and for $p=\infty$ can be different even in order.
 

 Let $n\in\mathbb{N}$. In what follows, we will require that the sequence of modules of the Fourier coefficients of the generated kernel  $\Psi_{\bar{\beta}}(t)$ satisfies the condition 
\begin{equation}\label{2}
\sum\limits_{k=n+1}^\infty \psi(k)<\psi(n).
\end{equation}


\begin{theorem}\label{1t}
	For arbitrary $\{\beta_k\}_{k=1}^\infty,$ $\beta_k\in\mathbb R,$ $1 < p \le \infty,$   $n\in\mathbb{N}$ and $\psi(k)\ge0$, which satisfy the condition \eqref{2}, the following inequality holds
	\begin{equation}\label{3}
	E_{n}(C_{\bar\beta,p}^\psi)_{C}\ge 
	\frac{\|\cos t\|_{p'}}{\pi}\left(\psi(n)-\sum\limits_{k=n+1}^\infty \psi(k)\right),
	\end{equation}
	where $\displaystyle\frac1p+\frac1{p'}=1.$
\end{theorem}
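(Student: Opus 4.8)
The plan is to construct an explicit function $f_{*}$ in the class $C_{\bar\beta,p}^\psi$ for which the best approximation by $\mathcal{T}_{2n-1}$ is bounded below by the right-hand side of \eqref{3}. The natural candidate is obtained by taking the extremal function $\varphi_{*}\in B_{p}^{0}$ for the one-dimensional problem of approximating $\cos nt$ (or, dually, realizing $\|\cos t\|_{p'}$), namely $\varphi_{*}(t)=c\,|\cos nt|^{p'-1}\operatorname{sgn}\cos nt$ normalized so that $\|\varphi_{*}\|_{p}=1$; this is the standard function for which $\frac1\pi\int_{-\pi}^{\pi}\varphi_{*}(t)\cos nt\,dt=\|\cos t\|_{p'}$ by Hölder's inequality and its equality conditions (for $p=\infty$ one takes $\varphi_{*}=\operatorname{sgn}\cos nt$). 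Then set $f_{*}=\mathcal{J}^{\psi}_{\bar\beta}\varphi_{*}$, so $f_{*}\in C_{\bar\beta,p}^\psi$ by definition \eqref{25_7'}.

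The key step is a duality lower bound for $E_{n}(f_{*})_{C}$. Since trigonometric polynomials of order $n-1$ are orthogonal to $\cos nx$ and $\sin nx$, for any $T_{n-1}\in\mathcal{T}_{2n-1}$ one has
$$
\|f_{*}-T_{n-1}\|_{C}\ge\frac{1}{\pi}\left|\int_{-\pi}^{\pi}\bigl(f_{*}(x)-T_{n-1}(x)\bigr)\cos nx\,dx\right|
=\frac{1}{\pi}\left|\int_{-\pi}^{\pi}f_{*}(x)\cos nx\,dx\right|,
$$
using $\|g\cos n\cdot\|_{1}\le\pi\|g\|_{C}$. It remains to compute the Fourier coefficient on the right. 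By \eqref{25_7'} and the Fourier series \eqref{1*}, writing $f_{*}$ as the convolution of $\varphi_{*}$ with $\Psi_{\bar\beta}$ and using that the $n$-th Fourier coefficient of $\Psi_{\bar\beta}$ carries the factor $\psi(n)$ while the convolution multiplies coefficients, one gets a main term equal to $\psi(n)\cdot\frac1\pi\int_{-\pi}^{\pi}\varphi_{*}(t)\cos(nt-\tfrac{\beta_n\pi}{2})\,dt$. Because $\varphi_{*}$ was chosen as $|\cos n\cdot|^{p'-1}\operatorname{sgn}\cos n\cdot$, the phase shift $\beta_{n}$ does not reduce the value: one should instead choose $\varphi_{*}$ adapted to the shifted cosine, i.e. $\varphi_{*}(t)=c\,\bigl|\cos(nt-\tfrac{\beta_{n}\pi}{2})\bigr|^{p'-1}\operatorname{sgn}\cos(nt-\tfrac{\beta_{n}\pi}{2})$, still with $\|\varphi_{*}\|_{p}=1$ and $\varphi_{*}\perp1$, which gives exactly $\|\cos t\|_{p'}\psi(n)$ for the $n$-th term.

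The main obstacle — and the reason the subtracted tail $\sum_{k=n+1}^{\infty}\psi(k)$ appears — is controlling the contribution of the higher harmonics $k\ge n+1$ of $\Psi_{\bar\beta}$ to $\frac1\pi\int f_{*}(x)\cos nx\,dx$. Here one must observe that the integral $\int_{-\pi}^{\pi}f_{*}(x)\cos nx\,dx$ only sees the $n$-th Fourier coefficient of $f_{*}$; the harmonics $k\neq n$ of $\Psi_{\bar\beta}$ do not contribute to this particular coefficient. Thus naively the bound would be the clean $\|\cos t\|_{p'}\psi(n)$. The tail term instead enters through the estimate $\|g\cos n\cdot\|_{1}\le\pi\|g\|_{C}$ being applied the other way: more carefully, one estimates $\|f_{*}-T_{n-1}\|_{C}$ from below by testing against a measure or function $h$ with $\|h\|_{1}\le\pi$ concentrated appropriately, and the error from replacing the ideal test element by $\cos nx$ is exactly bounded by the $L_{p'}$-norm of $\sum_{k\ge n+1}\psi(k)\cos(kt-\tfrac{\beta_k\pi}{2})$, which by the triangle inequality in $L_{p'}$ and $\|\cos(k\cdot-\theta)\|_{p'}=\|\cos t\|_{p'}$ is at most $\|\cos t\|_{p'}\sum_{k\ge n+1}\psi(k)$; condition \eqref{2} guarantees this is strictly less than the main term so the bound \eqref{3} is nontrivial. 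I would carry out the steps in this order: (i) fix $\varphi_{*}$ and verify $\varphi_{*}\in B_{p}^{0}$ and the equality case of Hölder; (ii) set $f_{*}=\mathcal{J}^{\psi}_{\bar\beta}\varphi_{*}$ and write out its Fourier coefficients; (iii) apply the orthogonality/duality lower bound for $E_{n}(f_{*})_{C}$; (iv) split the generating kernel as its $n$-th harmonic plus the remainder $\Psi^{(n)}_{\bar\beta}:=\sum_{k\ge n+1}\psi(k)\cos(k\cdot-\tfrac{\beta_k\pi}{2})$, bound the remainder's convolution contribution in $C$-norm by $\|\varphi_{*}\|_{p}\,\|\Psi^{(n)}_{\bar\beta}\|_{p'}\le\|\cos t\|_{p'}\sum_{k\ge n+1}\psi(k)$ via Young/Hölder; (v) combine to obtain \eqref{3}.
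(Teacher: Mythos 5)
Your choice of the extremal element $\varphi_{*}$, the passage to $f_{*}={\cal J}^{\psi}_{\bar\beta}\varphi_{*}$, and the splitting of the generating kernel into its $n$-th harmonic plus the tail $\Psi^{(n)}_{\bar\beta}$ with the H\"older bound $\frac1\pi\|\varphi_{*}\|_{p}\|\Psi^{(n)}_{\bar\beta}\|_{p'}\le\frac{\|\cos t\|_{p'}}{\pi}\sum_{k\ge n+1}\psi(k)$ all coincide with the paper's proof. The gap is in the lower-bound mechanism. The inequality $\|g\cos n\cdot\|_{1}\le\pi\|g\|_{C}$ is false: $\int_{-\pi}^{\pi}|\cos nx|\,dx=4$, so testing against $\cos nx$ yields only $\|f_{*}-T_{n-1}\|_{C}\ge\frac14\bigl|\int_{-\pi}^{\pi}f_{*}(x)\cos nx\,dx\bigr|$, and since $\frac14<\frac1\pi$ this route cannot produce the constant $\frac{\|\cos t\|_{p'}}{\pi}$ in \eqref{3} (moreover $\int f_{*}\cos nx\,dx$ carries an extra factor depending on $\beta_{n}$, which can even vanish, unless the test function is rotated accordingly). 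You also observe, correctly, that the harmonics $k\ge n+1$ contribute nothing to this integral, which leaves the proposal internally inconsistent about where the subtracted tail in \eqref{3} comes from; the remedy you gesture at --- ``testing against a measure concentrated appropriately'' --- is exactly the missing idea, and it is never carried out.

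What the paper actually does: since $\varphi_{n,p}$ is $\frac{2\pi}{n}$-periodic it is orthogonal to ${\cal T}_{2n-1}$, so $f_{*}=F_{1}+F_{2}$ with $F_{1}$ the convolution with the $n$-th harmonic alone, and one computes $F_{1}$ exactly at the $2n$ equispaced points $x_{m}=\frac{\beta_{n}\pi}{2n}+\frac{m\pi}{n}$, obtaining $F_{1}(x_{m})=\frac{(-1)^{m}\|\cos t\|_{p'}}{\pi}\psi(n)$. Condition \eqref{2} together with $\|F_{2}\|_{C}\le\frac{\|\cos t\|_{p'}}{\pi}\sum_{k\ge n+1}\psi(k)$ then forces $f_{*}$ to alternate in sign at the $x_{m}$ with $|f_{*}(x_{m})|\ge\frac{\|\cos t\|_{p'}}{\pi}\bigl(\psi(n)-\sum_{k\ge n+1}\psi(k)\bigr)$, and de la Vall\'ee Poussin's theorem gives \eqref{3}. (Equivalently, one may test $f_{*}-T_{n-1}$ against the discrete measure $\frac{1}{2n}\sum_{m=0}^{2n-1}(-1)^{m}\delta_{x_{m}}$, which has total variation $1$ and annihilates ${\cal T}_{2n-1}$; this is the correctly normalized functional that the continuous test function $\cos nx$ cannot replace.) Note also that \eqref{2} is used essentially here: it guarantees the sign alternation required by de la Vall\'ee Poussin's theorem, not merely that the right-hand side of \eqref{3} is positive.
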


\begin{proof}
Let us consider a function
\begin{equation}\label{4}
\varphi_{n,p}(t)=\|\cos t\|_{p'}^{1-p'}|\cos nt|^{p'-1}{\rm sign} \cos nt.
\end{equation}
Since $\varphi_{n,p}\perp1$ and for $1<p<\infty$
\begin{equation*}\label{5}
\left(\int\limits_{-\pi}^\pi
|\varphi_{n,p}(t)|^p dt\right)^\frac1p=\|\cos t\|_{p'}^{1-p'}\left(\int\limits_{-\pi}^\pi
|\cos nt|^{p(p'-1)} dt\right)^\frac1p
=
$$
$$
=
\|\cos t\|_{p'}^{1-p'}\left(\int\limits_{-\pi}^\pi|\cos nt|^{p'} dt\right)^\frac1p=
$$
$$
=
\|\cos t\|_{p'}^{1-p'}\left(\left(\int\limits_{-\pi}^\pi|\cos nt|^{p'} dt\right)^\frac1{p'}\right)^\frac{p'}{p}
= \|\cos t\|_{p'}^{1-p'}\|\cos t\|^\frac{p'}{p}_{p'}=1,
\end{equation*}
then $\|\varphi_{n,p}\|_p=1$ and therefore $\varphi_{n,p}\in B_p^0,\ 1<p<\infty.$ 

We also put $\varphi_{n,\infty}(t)={\rm sign}\cos nt$.	Obviously that $\varphi_{n,\infty}\in B_\infty^0$.

Let us consider a function
$$
f_{n,p,\bar{\beta}}(x)=\frac{1}{\pi}\int\limits_{-\pi}^{\pi}
\varphi_{n,p}(x-t)\Psi_{\bar{\beta}}(t)dt
$$ 
from the class $C_{\bar{\beta},p}^\psi, 1<p\le\infty$. We have
\begin{equation}\label{5}
f_{n,p,\bar{\beta}}(x)=\frac{1}{\pi}\int\limits_{-\pi}^{\pi}\varphi_{n,p}(x-t)
\sum_{k=1}^\infty\psi(k)\cos\left(kt-\frac{\beta_k\pi}2\right)dt
=
$$
$$
=
\frac{1}{\pi}\int\limits_{-\pi}^{\pi}\varphi_{n,p}(x-t)
\sum_{k=1}^{n-1}\psi(k)\cos\left(kt-\frac{\beta_k\pi}2\right)dt+
$$
$$
+
\frac{1}{\pi}\int\limits_{-\pi}^{\pi}\varphi_{n,p}(x-t)
\sum_{k=n}^\infty\psi(k)\cos\left(kt-\frac{\beta_k\pi}2\right)dt.
\end{equation}	

Since $\varphi_{n,p}(t)$ is the $\displaystyle\frac{2\pi}n$-periodic function, then according to  \cite[Proposition 4.1.2]{Kornejchuk1987} it is orthogonal to all trigonometric polynomials of order $n-1$. Therefore, from (\ref{5}) we obtain
\begin{equation}\label{6}
f_{n,p,\bar{\beta}}(x)=
\frac{1}{\pi}\int\limits_{-\pi}^{\pi}\varphi_{n,p}(x-t)
\sum_{k=n}^\infty\psi(k)\cos\left(kt-\frac{\beta_k\pi}2\right)dt=
$$
$$
=
\frac{1}{\pi}\int\limits_{-\pi}^{\pi}\varphi_{n,p}(t)
\psi(n)\cos\left(n(x-t)-\frac{\beta_n\pi}2\right)+
$$
$$
+
\frac{1}{\pi}\int\limits_{-\pi}^{\pi}\varphi_{n,p}(x-t)
\sum_{k=n+1}^\infty\psi(k)\cos\left(kt-\frac{\beta_k\pi}2\right)dt=:F_1(x)+F_2(x).
\end{equation}	

By virtue of H$\rm\ddot{o}$lder's inequality and the fact that $\varphi_{n,p}\in B_p^0, 1<p\le\infty,$ we obtain the estimate
\begin{equation}\label{7}
\left\|F_2\right\|_C 
\le 
\frac{1}{\pi}\|\varphi_{n,p}\|_p\left\|\sum_{k=n+1}^\infty\psi(k)\cos\left(kt-\frac{\beta_k\pi}2\right)\right\|_{p'}=
$$
$$
=
\frac{1}{\pi}\left\|\sum_{k=n+1}^\infty\psi(k)\cos\left(kt-\frac{\beta_k\pi}2\right)\right\|_{p'}
\le
\frac{\|\cos t\|_{p'}}{\pi} \sum_{k=n+1}^\infty\psi(k).
\end{equation}

Taking into account that
$$
F_1(x)
=
$$
$$
=
\frac{1}{\pi}\int\limits_{-\pi}^{\pi}
\|\cos t\|_{p'}^{1-p'}|\cos nt|^{p'-1}{\rm sign} \cos nt\,\psi(n)\cos\left(n(x{-}t){-}\frac{\beta_n\pi}2\right) dt=
$$
$$
=
\frac{\|\cos t\|_{p'}^{1-p'}}{\pi}\psi(n)\int\limits_{-\pi}^{\pi}|\cos nt|^{p'-1}{\rm sign} \cos nt
\times
$$
$$
\times
\left(\cos\left(nx-\frac{\beta_n\pi}2\right)\cos nt+\sin\left(nx-\frac{\beta_n\pi}2\right)\sin nt\right) dt,
$$	
we consider on the period $\displaystyle\left[\frac{\beta_n\pi}{2n}, 2\pi+\frac{\beta_n\pi}{2n}\right)$ the set of $2n$ points $\displaystyle\frac{\beta_n\pi}{2n}=x_0<x_1<\ldots<x_{2n-1}<2\pi+\frac{\beta_n\pi}{2n}$ of the form
\begin{equation}\label{8}
x_m=\frac{\beta_n\pi}{2n}+\frac{m\pi}{nзнам},\quad m=\overline{0,2n-1}.
\end{equation}
Let us show that
\begin{equation}\label{9}
F_1(x_m)=\frac{(-1)^m\|\cos t\|_{p'}}\pi \psi(n), \quad m=\overline{0,2n-1}.
\end{equation}

Indeed, as for any $t\in[-\pi,\pi)$
$$
|\cos nt|^{p'-1}{\rm sign} \cos nt\cos\left(n(x_m-t)-\frac{\beta_n\pi}2\right)
=
$$
$$
=|\cos nt|^{p'-1}{\rm sign} \cos nt \cos\left(\frac{\beta_n\pi}2 +m\pi -nt-\frac{\beta_n\pi}2\right)=
$$
$$
=(-1)^m |\cos nt|^{p'-1}{\rm sign} \cos nt\, \cos nt=(-1)^m|\cos nt|^{p'},
$$
then
$$
F_1(x_m)
=
$$
$$
=\frac{\|\cos t\|_{p'}^{1-p'}}{\pi}\psi(n)\int\limits_{-\pi}^{\pi}
|\cos nt|^{p'-1}{\rm sign} \cos nt \cos\left(n(x_m{-}t){-}\frac{\beta_n\pi}2\right)dt=
$$
$$
=\frac{\|\cos t\|_{p'}^{1-p'}}{\pi}\psi(n)(-1)^m\int\limits_{-\pi}^{\pi}
|\cos nt|^{p'}dt=\frac{(-1)^m\|\cos t\|_{p'}}{\pi}\psi(n).
$$
From \eqref{2}, (\ref{7}) and (\ref{9}) it follows that
$$
|F_1(x_m)|=\frac{\|\cos t\|_{p'}}{\pi}\psi(n)>\frac{\|\cos t\|_{p'}}{\pi}\sum_{k=n+1}^\infty\psi(k)\ge\|F_2\|_C\ge |F_2(x_m)|,
$$
and hence for $f_{n,p,\bar{\beta}}(x_m)=F_1(x_m)+F_2(x_m)$ the following relations hold
\begin{equation}\label{10}
{\rm sign}f_{n,p,\bar{\beta}}(x_m)={\rm sign}F_1(x_m)=(-1)^m,\quad m=0,1,\ldots,2n-1,
\end{equation}
and
\begin{equation}\label{11}
|f_{n,p,\bar{\beta}}(x_m)|\ge|F_1(x_m)|-|F_2(x_m)|\ge|F_1(x_m)|-\|F_2\|_C\ge
$$
$$
\ge\frac{\|\cos t\|_{p'}}{\pi}\Big(\psi(n)-\sum\limits_{k=n+1}^\infty \psi(k)\Big).
\end{equation}
Then by virtue of Valle Poussin's theorem (see, i.e., \cite[Theorem 6.2.2]{Stepanets1987})
$$
E_n(f_{n,p,\bar{\beta}})_C\ge\min_{m=0,1,\ldots,2n-1}|f_{n,p,\bar{\beta}}(x_m)|\ge\frac{\|\cos t\|_{p'}}{\pi}\Big(\psi(n)-\sum\limits_{k=n+1}^\infty \psi(k)\Big).
$$
\end{proof}

\begin{theorem}\label{1.5t}
	For arbitrary $\{\beta_k\}_{k=1}^\infty,$ $\beta_k\in\mathbb R,$  $n\in\mathbb{N}$ and $\psi(k)\ge0$, which satisfy the condition \eqref{2}, the following inequality holds
	\begin{equation}\label{3'}
	E_{n}(C_{\bar\beta,1}^\psi)_{C}\ge 
	\frac{1}{\pi}\left(\psi(n)-\sum\limits_{k=n+1}^\infty \psi(k)\right).
	\end{equation}
\end{theorem}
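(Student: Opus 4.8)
The plan is to repeat, with a single modification, the argument proving Theorem~\ref{1t}. That argument fails for $p=1$ only because the extremal function $\varphi_{n,p}$ of \eqref{4} degenerates (its exponent $p'-1$ becomes infinite); the rest of the scheme — the splitting $f=F_1+F_2$ into the $n$-th harmonic term and the tail, the bound $\|F_2\|_C\le\frac1\pi\sum_{k\ge n+1}\psi(k)$, the evaluation at the points $x_m=\frac{\beta_n\pi}{2n}+\frac{m\pi}n$ as in \eqref{9}, and the concluding application of de la Vall\'ee Poussin's theorem (see \cite[Theorem 6.2.2]{Stepanets1987}) — is insensitive to whether $p>1$ or $p=1$ once a suitable member of $B_1^0$ is supplied. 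So the whole task reduces to constructing the right $\varphi\in B_1^0$.

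First I would take, for small $\delta>0$, the $\frac{2\pi}n$-periodic alternating step function $\varphi_{n,1,\delta}$ equal to $\pm\frac1{4n\delta}$ on the intervals of length $2\delta$ centred respectively at $\frac{2\pi j}n$ (where $\cos nt=1$) and at $\frac{2\pi j}n+\frac\pi n$ (where $\cos nt=-1$), $j=\overline{0,n-1}$, and equal to $0$ elsewhere; this is the $L_1$-normalized, mean-zero approximation of the corresponding signed point masses that plays, via duality, the role of \eqref{4} for $p=1$. Then $\varphi_{n,1,\delta}\perp1$, $\|\varphi_{n,1,\delta}\|_1=1$, and, being $\frac{2\pi}n$-periodic, $\varphi_{n,1,\delta}$ is orthogonal to $\mathcal T_{2n-1}$ by \cite[Proposition 4.1.2]{Kornejchuk1987}; moreover a direct computation gives $c_\delta:=\int_{-\pi}^\pi\varphi_{n,1,\delta}(t)\cos nt\,dt=\frac{\sin n\delta}{n\delta}\to1$ as $\delta\to0^+$. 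Setting $f_{n,1,\bar\beta,\delta}(x)=\frac1\pi\int_{-\pi}^\pi\varphi_{n,1,\delta}(x-t)\Psi_{\bar\beta}(t)\,dt\in C^\psi_{\bar\beta,1}$ (the convolution is well defined since here $\Psi_{\bar\beta}\in L_\infty$), one gets, exactly as in \eqref{6}--\eqref{9} but with $p=1$, $p'=\infty$, the decomposition $f_{n,1,\bar\beta,\delta}=F_1^\delta+F_2^\delta$ with $\|F_2^\delta\|_C\le\frac1\pi\sum_{k\ge n+1}\psi(k)$ (by H\"older's inequality) and $F_1^\delta(x_m)=\frac{(-1)^m}\pi\psi(n)\,c_\delta$, $m=\overline{0,2n-1}$.

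By \eqref{2} and $c_\delta\to1$ we may fix $\delta$ so small that $\frac{\psi(n)c_\delta}\pi>\frac1\pi\sum_{k\ge n+1}\psi(k)\ge\|F_2^\delta\|_C$; then ${\rm sign}\,f_{n,1,\bar\beta,\delta}(x_m)=(-1)^m$ and $|f_{n,1,\bar\beta,\delta}(x_m)|\ge\frac1\pi\big(\psi(n)c_\delta-\sum_{k\ge n+1}\psi(k)\big)$ for all $m$, so de la Vall\'ee Poussin's theorem yields $E_n(C^\psi_{\bar\beta,1})_C\ge\frac1\pi\big(\psi(n)c_\delta-\sum_{k\ge n+1}\psi(k)\big)$, and letting $\delta\to0^+$ gives \eqref{3'}. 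The main obstacle is precisely this construction — producing an $L_1$-normalized, mean-zero, $\frac{2\pi}n$-periodic function whose $n$-th cosine Fourier coefficient is arbitrarily close to the extremal value $1$ — together with the verification that all estimates remain uniform in $\delta$; both are routine but should be carried out with some care. I note that this step can also be bypassed entirely: since $\|\varphi\|_1\le(2\pi)^{1/p'}\|\varphi\|_p$ for $1<p\le\infty$, one has $C^\psi_{\bar\beta,p}\subset(2\pi)^{1/p'}C^\psi_{\bar\beta,1}$ and hence $E_n(C^\psi_{\bar\beta,p})_C\le(2\pi)^{1/p'}E_n(C^\psi_{\bar\beta,1})_C$; combining this with \eqref{3} and letting $p\to1^+$ (so that $\|\cos t\|_{p'}\to1$ and $(2\pi)^{1/p'}\to1$) again yields \eqref{3'}.
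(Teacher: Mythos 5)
Your main argument is correct and is essentially the paper's own proof: the paper uses the same $\frac{2\pi}{n}$-periodic, mean-zero, $L_1$-normalized alternating step function (with amplitude $\frac{1}{2\delta}$ on intervals of half-width $\frac{\delta}{2n}$, a reparametrization of yours), obtains $F_1(\delta,x_m)=\frac{(-1)^m}{\pi}\,\frac{2}{\delta}\sin\frac{\delta}{2}\,\psi(n)$ in place of your $c_\delta=\frac{\sin n\delta}{n\delta}$, applies de la Vall\'ee Poussin at the same points $x_m$, and lets $\delta\to0$. Your closing remark --- deducing \eqref{3'} from Theorem~\ref{1t} by the inclusion $C^\psi_{\bar\beta,p}\subset(2\pi)^{1/p'}C^\psi_{\bar\beta,1}$ and letting $p\to1^+$ --- is a valid shortcut that the paper does not use.
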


\begin{proof}  
Let us consider a function
\begin{equation}\label{4'}
\varphi_{n,1}(\delta,t)=\left\{\begin{array}{cl}
\displaystyle\frac{(-1)^m}{2\delta}, & \displaystyle t\in\left(\frac{m\pi}{n}-\frac{\delta}{2n},\frac{m\pi}{n}+\frac{\delta}{2n}\right),\ m\in\mathbb{Z},\\ \\ 
0,&\displaystyle t\in\mathbb{R}\setminus\mathop{\cup}\limits_{m\in\mathbb{Z}}\left(\frac{m\pi}{n}-\frac{\delta}{2n},\frac{m\pi}{n}+\frac{\delta}{2n}\right),\ m\in\mathbb{Z},
\end{array}
\right.
\end{equation}
where $0<\delta<\frac\pi2$.

According to the definition 
\begin{equation}\label{4''}
{\rm sign }\,\varphi_{n,1}(\delta,t)={\rm sign }\,\cos nt, \quad\displaystyle t\in\left(\frac{m\pi}{n}-\frac{\delta}{2n},\frac{m\pi}{n}+\frac{\delta}{2n}\right),\ m\in\mathbb{Z},
\end{equation}
and the function $\varphi_{n,1}(\delta,t) $ is $\displaystyle\frac{2\pi}n$-periodic function. Besides
\begin{equation}\label{4'}
\|\varphi_{n,1}(\delta,\cdot)\|_1=\sum_{m=0}^{2n-1}\frac1{2\delta}\frac{2\delta}{2n}=1,
\end{equation}
that is for any  $0<\delta<\frac\pi2\ $
$\ \varphi_{n,1}(\delta,\cdot)\in B_1^0.$

Let us consider a function
\begin{equation}\label{5'}
f_{n,1,\bar{\beta}}(\delta,x)=\frac{1}{\pi}\int\limits_{-\pi}^{\pi}
\varphi_{n,1}(\delta,x-t)\Psi_{\bar{\beta}}(t)dt
\end{equation}
from the class $C_{\bar{\beta},1}^\psi$. Since $\varphi_{n,1}(\delta,t) \perp\sum\limits_{k=1}^{n-1}\psi(k)\cos\left(kt-\frac{\beta_k\pi}2\right)$, then
\begin{equation*}\label{5'''}
f_{n,1,\bar{\beta}}(\delta,x)=F_1(\delta,x)+F_2(\delta,x),
\end{equation*}
where
\begin{equation}\label{6'}
F_1(\delta,x)=
\frac{1}{\pi}\int\limits_{-\pi}^{\pi}\varphi_{n,1}(\delta,x-t)
\psi(n)\cos\left(nt-\frac{\beta_n\pi}2\right)dt=
$$
$$
=
\frac{1}{\pi}\psi(n)\int\limits_{-\pi}^{\pi}\varphi_{n,1}(\delta,t)
\cos\left(n(x-t)-\frac{\beta_n\pi}2\right)dt,
\end{equation}	
\begin{equation}\label{7'}
F_2(\delta,x)=
\frac{1}{\pi}\int\limits_{-\pi}^{\pi}\varphi_{n,1}(\delta,x-t)
\sum_{k=n+1}^\infty\psi(k)\cos\left(kt-\frac{\beta_k\pi}2\right)dt.
\end{equation}	

By virtue of H$\rm\ddot{o}$lder's inequality and the equality \eqref{4'}
\begin{equation}\label{8'}
\left\|F_2(\delta,\cdot)\right\|_C \le 
\frac{1}{\pi}\|\varphi_{n,1}(\delta,\cdot)\|_1\left\|\sum_{k=n+1}^\infty\psi(k)\cos\left(kt-\frac{\beta_k\pi}2\right)\right\|_{\infty}
\le
$$
$$
\le
\frac{1}{\pi} \sum_{k=n+1}^\infty\psi(k).
\end{equation}

Let us consider on the segment $\displaystyle\left[\frac{\beta_n\pi}{2n}, 2\pi+\frac{\beta_n\pi}{2n}\right)$ the following point set
\begin{equation*}\label{8}
x_m=\frac{\beta_n\pi}{2n}+\frac{m\pi}{n},\quad m=\overline{0,2n-1}.
\end{equation*}

Let us show that
\begin{equation}\label{9'}
F_1(\delta,x_m)=\frac{(-1)^m}\pi \frac2\delta\sin\frac{\delta}{2}\,\psi(n), \quad m=\overline{0,2n-1}.
\end{equation}

Indeed, since 
$$
\varphi_{n,1}(\delta,t)\cos\left(n(x_m-t)-\frac{\beta_n\pi}2\right)=
$$
$$
=
\varphi_{n,1}(\delta,t)\cos\left(n\left(\frac{\beta_n\pi}{2n} +\frac{m\pi}n -t\right)-\frac{\beta_n\pi}2\right)=
$$
$$
= \varphi_{n,1}(\delta,t)\cos(m\pi-nt)=(-1)^m\varphi_{n,1}(\delta,t)\cos nt,
$$
Then, in view of  \eqref{4''}, \eqref{4'} and \eqref{6'},
$$
F_1(x_m)
=\frac1{\pi}\psi(n)\int\limits_{-\pi}^{\pi}
(-1)^m\varphi_{n,1}(\delta,t)\cos ntdt
=
$$
$$
=
\frac{(-1)^m }{\pi}\psi(n)\sum_{m=0}^{2n-1}\int\limits_{-\frac\pi{2n}+\frac {m\pi}{2n}}^{\frac\pi{2n}+\frac {m\pi}{2n}}
\varphi_{n,1}(\delta,t)\cos ntdt=
$$
$$
=\frac{(-1)^m }{\pi}\psi(n)\sum_{m=0}^{2n-1}\int\limits_{-\frac\delta{2n}+\frac {m\pi}{2n}}^{\frac\delta{2n}+\frac {m\pi}{2n}}
\frac{(-1)^m }{2\delta}\cos nt dt
=
$$
$$
=
\frac{(-1)^m }{\pi}\psi(n)\sum_{m=0}^{2n-1}\, \frac{1 }{2\delta} \int\limits_{-\frac\delta{2n}}^{\frac\delta{2n}}
\cos nt dt=
$$
$$
=\frac{(-1)^m }{\pi}\psi(n)\frac{2n}{2\delta n}\int\limits_{-\frac\delta2}^{\frac\delta2}
\cos t dt=\frac{(-1)^m}\pi \psi(n)\frac2\delta\sin\frac{\delta}{2}.
$$

Choose $\delta$  so small that 
\begin{equation}\label{10'}
\frac2\delta\sin\frac{\delta}{2}\,\psi(n)>\sum_{k=n+1}^\infty\psi(k).
\end{equation}

For such $\delta$, taking into account  \eqref{8'}, \eqref{9'} and \eqref{10'},
$$
|F_1(\delta,x_m)|=\frac2\delta\sin\frac{\delta}{2} \psi(n)>\frac1{\pi}\sum_{k=n+1}^\infty\psi(k)\ge\|F_2(\delta,\cdot)\|_C\ge |F_2(\delta,x_m)|,
$$
and hence for $f_{n,p,\bar{\beta}}(\delta,x_m)=F_1(\delta,x_m)+F_2(\delta,x_m)$ the following relations hold
\begin{equation}\label{11'}
{\rm sign}f_{n,p,\bar{\beta}}(\delta,x_m)={\rm sign}F_1(\delta,x_m)=(-1)^m,\quad m=0,1,\ldots,2n-1,
\end{equation}
and
\begin{equation}\label{12'}
|f_{n,p,\bar{\beta}}(\delta,x_m)|\ge|F_1(\delta,x_m)|-|F_2(\delta,x_m)|\ge|F_1(\delta,x_m)|-\|F_2
(\delta,\cdot)\|_C\ge
$$
$$
\ge\frac{1}{\pi}\Big(\frac2\delta\sin\frac{\delta}{2} \psi(n)-\sum\limits_{k=n+1}^\infty \psi(k)\Big).
\end{equation}
Then by virtue of Valle Poussin's theorem (see, i.e., \cite[Theorem 6.2.2]{Stepanets1987})
\begin{equation}\label{13'}
E_n(f_{n,p,\bar{\beta}}(\delta,\cdot))_C\ge\min_{m=0,1,\ldots,2n-1}|f_{n,p,\bar{\beta}}(\delta,x_m)|\ge
$$
$$
\ge\frac{1}{\pi}\Big(\frac2\delta\sin\frac{\delta}{2} \psi(n)-\sum\limits_{k=n+1}^\infty \psi(k)\Big),
\end{equation}
where $\delta$ satisfy the condition \eqref{10'}, and therefore, by virtue of the belonging $f_{n,p,\bar{\beta}}(\delta,\cdot)\in C_{\bar\beta,1}^\psi,$ the following inequality is true 
\begin{equation}\label{14'}
E_n(C_{\bar\beta,1}^\psi)_C\ge\frac{1}{\pi}\Big(\frac2\delta\sin\frac{\delta}{2} \psi(n)-\sum\limits_{k=n+1}^\infty \psi(k)\Big).
\end{equation}
Taking the limit as  $\delta\rightarrow0$ in the inequality \eqref{14'}, we obtain  \eqref{3'}.
\end{proof}

The main statement of the work is the following theorem.

\begin{theorem}\label{2t}
	For arbitrary $\{\beta_k\}_{k=1}^\infty,$ $\beta_k\in\mathbb R,$ $1 < p \le \infty,$   $n\in\mathbb{N}$ and $\psi(k)\ge0$, which satisfy the condition \eqref{2}, the following inequalities hold
\begin{multline}\label{12}
	\frac{\|\cos t\|_{p'}}{\pi}\Big(\psi(n)-\sum\limits_{k=n+1}^\infty \psi(k)\Big)	\le\\ \le E_{n}(C_{\bar\beta,p}^\psi)_{C}\le {\cal E}_{n}(C_{\bar\beta,p}^\psi)_{C}\le\\
	\le\frac{\|\cos t\|_{p'}}{\pi}\Big(\psi(n)+\sum\limits_{k=n+1}^\infty \psi(k)\Big),
\end{multline}
	where $\displaystyle\frac1p+\frac1{p'}=1.$\\
	If  $\psi(k)$ satisfies the condition \eqref{15_4'}, then the following asymptotic equalities hold
	\begin{equation}\label{14_14}
	\left.\begin{array}{c}
	{\cal E}_{n}(C_{\bar\beta,p}^\psi)_{C}\\ E_{n}(C_{\bar\beta,p}^\psi)_{C}
	\end{array}\right\}=
	\frac{\|\cos t\|_{p'}}{\pi}\psi(n)+{\cal O}(1)\sum\limits_{k=n+1}^\infty \psi(k),
	\end{equation}
	where ${\cal O}(1)$ are the quantities uniformly bounded in all parameters.
\end{theorem}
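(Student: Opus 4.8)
The plan is to obtain the two-sided chain \eqref{12} by combining three facts and then to read \eqref{14_14} off from it. The leftmost inequality in \eqref{12} is precisely the conclusion of Theorem~\ref{1t}; the middle inequality $E_{n}(C_{\bar\beta,p}^\psi)_{C}\le{\cal E}_{n}(C_{\bar\beta,p}^\psi)_{C}$ is the general relation \eqref{15_5}. So the only genuinely new ingredient is the rightmost inequality, i.e.\ the upper bound ${\cal E}_{n}(C_{\bar\beta,p}^\psi)_{C}\le\frac{\|\cos t\|_{p'}}{\pi}(\psi(n)+\sum_{k=n+1}^\infty\psi(k))$.

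To establish that upper bound I would first record the elementary consequence of \eqref{2}: from $\psi(n+1)\le\sum_{k=n+1}^\infty\psi(k)<\psi(n)$ the sequence $\psi$ decreases (while positive) and $\sum_{k}\psi(k)<\infty$, so the Fourier series \eqref{1*} of $\Psi_{\bar\beta}$ converges absolutely and uniformly and all the term-by-term operations below are legitimate. Then for an arbitrary $f={\cal J}^\psi_{\bar\beta}\varphi\in C_{\bar\beta,p}^\psi$ with $\varphi\in B_p^0$ I would use that the Fourier coefficients of the convolution \eqref{25_7'} are products and that $\varphi\perp1$ to obtain, exactly as in the computation \eqref{6} — with the Fourier projection $S_{n-1}$ now playing the role that the $\frac{2\pi}{n}$-periodicity of $\varphi_{n,p}$ played there — the representation $f(x)-S_{n-1}(f;x)=\frac1\pi\int_{-\pi}^{\pi}\varphi(x-t)\sum_{k=n}^\infty\psi(k)\cos(kt-\beta_k\pi/2)\,dt$. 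H\"older's inequality together with $\|\varphi\|_p\le1$ (as in \eqref{7}) then bounds $\|f(\cdot)-S_{n-1}(f;\cdot)\|_C$ by $\frac1\pi$ times the $L_{p'}$-norm of the tail $\sum_{k=n}^\infty\psi(k)\cos(kt-\beta_k\pi/2)$, and the triangle inequality in $L_{p'}$ combined with the identities $\|\cos(kt-\beta_k\pi/2)\|_{p'}=\|\cos t\|_{p'}$ (a change of variable, using the $\pi$-periodicity of $|\cos|$) yields the bound $\frac{\|\cos t\|_{p'}}{\pi}\sum_{k=n}^\infty\psi(k)=\frac{\|\cos t\|_{p'}}{\pi}(\psi(n)+\sum_{k=n+1}^\infty\psi(k))$. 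Taking the supremum over $f\in C_{\bar\beta,p}^\psi$ completes \eqref{12}.

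For \eqref{14_14}, the chain \eqref{12} already shows that each of $E_{n}(C_{\bar\beta,p}^\psi)_{C}$ and ${\cal E}_{n}(C_{\bar\beta,p}^\psi)_{C}$ differs from $\frac{\|\cos t\|_{p'}}{\pi}\psi(n)$ by at most $\frac{\|\cos t\|_{p'}}{\pi}\sum_{k=n+1}^\infty\psi(k)$ in absolute value. Since $\int_{-\pi}^{\pi}|\cos t|^{p'}\,dt\le\int_{-\pi}^{\pi}|\cos t|\,dt=4$ for $p'\ge1$, one has $\|\cos t\|_{p'}\le4^{1/p'}\le4$ uniformly over the admissible values of $p'$, so this error is ${\cal O}(1)\sum_{k=n+1}^\infty\psi(k)$ with ${\cal O}(1)$ bounded in all parameters; this is \eqref{14_14}. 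If in addition \eqref{15_4'} holds, then (taking the value $1$ in the definition of $o(1)$) the condition \eqref{2}, and hence \eqref{12} and \eqref{14_14}, hold for all sufficiently large $n$, while the remainder in \eqref{14_14} becomes $o(1)\psi(n)$, which gives the asymptotic equalities \eqref{15_6}.

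I do not expect a serious structural obstacle here: the theorem is essentially a corollary of Theorem~\ref{1t}, of \eqref{15_5} and of a soft upper estimate, and the proof of that upper estimate is a routine H\"older-plus-triangle-inequality argument parallel to the one already used for $F_2$ in Theorem~\ref{1t}. The only two points needing some care are the Fourier-coefficient bookkeeping that identifies $f-S_{n-1}(f)$ with the convolution of $\varphi$ and the tail of the kernel \eqref{1*} (legitimized by the absolute convergence noted above), and the verification that $\|\cos t\|_{p'}$, and therefore the ${\cal O}(1)$ in \eqref{14_14}, is bounded uniformly in $p'$.
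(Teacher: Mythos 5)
Your proposal is correct and follows essentially the same route as the paper: the lower bound is quoted from Theorem~\ref{1t}, the middle inequality is \eqref{15_5}, the upper bound is obtained by representing $f-S_{n-1}(f)$ as the convolution of $\varphi$ with the tail $\sum_{k=n}^\infty\psi(k)\cos(kt-\beta_k\pi/2)$ and applying H\"older plus the triangle inequality, and \eqref{14_14} is read off from \eqref{12}. The only (harmless) additions are your explicit justification of the term-by-term manipulations and the uniform bound $\|\cos t\|_{p'}\le 4$, which the paper leaves implicit.
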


Note that the asymptotic equality \eqref{14_14} for the quantities ${\cal E}_{n}(C_{\bar\beta,p}^\psi)_{C}$ is established in paper \cite{Serdyuk_2005_8}.

\begin{proof} Taking into account Theorems 1 and 2, it suffices to verify the validity of the last inequality in $(\ref{12})$.

By virtue of H$\rm\ddot{o}$lder's inequality for arbitrary $f\in C_{\bar{\beta},p}^\psi, 1\le p\le\infty,$
$$
\|f-S_{n-1}(f)\|_C\le \frac{1}{\pi}\|\varphi\|_p\left\|\sum\limits_{k=n}^\infty \psi(k)\cos\left(nt-\frac{\beta_k\pi}2\right)\right\|_{p'}\le
$$
$$
\le \frac{1}{\pi}\sum\limits_{k=n}^\infty \psi(k)\left\|\cos\left(nt-\frac{\beta_k\pi}2\right)\right\|_{p'}=
\frac{\|\cos t\|_{p'}}{\pi}\sum\limits_{k=n}^\infty \psi(k).
$$ 
Thus,
\begin{equation}
{\cal E}_{n}(C_{\bar\beta,p}^\psi)_{C}\le\frac{\|\cos t\|_{p'}}{\pi}\sum\limits_{k=n}^\infty \psi(k).
\label{16}
\end{equation}
Inequalities \eqref{12} follow from  \eqref{3}, \eqref{3'} and \eqref{16}.

To verify the validity of the asymptotic equalities \eqref{14_14} just go to the limit as $n\rightarrow\infty$ in (\ref{12}) and take into account the condition \eqref{15_4'}. 
\end{proof} 

Note that the condition \eqref{15_4'} is satisfied if  $\psi(k)$  satisfies the condition $D_0:$
\begin{equation}\label{17}
\lim_{k\rightarrow\infty}\frac{\psi(k+1)}{\psi(k)знам}=0.
\end{equation}

We give the corollaries of Theorem 3 in some important special cases.


\begin{theorem}\label{3t}
	Let $1 \le p \le \infty,$  $r>1$, $n\in\mathbb{N}$  and $\{\beta_k\}_{k=1}^\infty$ be an arbitrary sequence of 	real numbers. Then for $r\ge n+1$ such that
	\begin{equation}\label{18}
	\left(1+\frac1n\right)^{-r}<\left(2+\frac1n\right)^{-1}
	\end{equation}
	the following inequalities hold
	\begin{multline}\label{19}
	\frac{\|\cos t\|_{p'}}{\pi} n^{-r}\left(1-\frac{2+\frac1n}{\left(1+\frac1n\right)^r}\right)	\le\\ \le E_{n}(W_{\bar\beta,p}^r)_{C}\le {\cal E}_{n}(W_{\bar\beta,p}^r)_{C}\le\\ \le
	\frac{\|\cos t\|_{p'}}{\pi} n^{-r}\left(1+\frac{2+\frac1n}{\left(1+\frac1n\right)^r}\right),
	\end{multline}
	where $\displaystyle\frac1p+\frac1{p'}=1.$
\end{theorem}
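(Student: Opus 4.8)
The plan is to obtain Theorem~\ref{3t} as a direct specialization of Theorem~\ref{2t} to the sequence $\psi(k)=k^{-r}$, for which $W^{r}_{\bar\beta,p}=C^{\psi}_{\bar\beta,p}$; at the endpoint $p=1$, where Theorem~\ref{2t} is not stated, the same chain \eqref{12} is produced by the lower bound of Theorem~\ref{1.5t} together with \eqref{15_5} and \eqref{16} (with $\|\cos t\|_{\infty}=1$). The only genuine computation is a sufficiently sharp upper bound for the tail $\sum_{k=n+1}^{\infty}k^{-r}$; everything else is substitution.

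First I would estimate the tail. Peeling off the first term and comparing the remainder with an integral (here $r>1$ is what makes the integral converge),
\[
\sum_{k=n+1}^{\infty}k^{-r}\le (n+1)^{-r}+\int_{n+1}^{\infty}x^{-r}\,dx=(n+1)^{-r}\left(1+\frac{n+1}{r-1}\right).
\]
Since $r\ge n+1$ gives $r-1\ge n$, hence $\frac{n+1}{r-1}\le\frac{n+1}{n}=1+\frac1n$, and since $(n+1)^{-r}=n^{-r}\left(1+\frac1n\right)^{-r}$, this yields
\[
\sum_{k=n+1}^{\infty}k^{-r}\le\left(2+\frac1n\right)(n+1)^{-r}=n^{-r}\,\frac{2+\frac1n}{\left(1+\frac1n\right)^{r}}.
\]
Condition~\eqref{18} is precisely the inequality $\frac{2+1/n}{(1+1/n)^{r}}<1$, so the last display shows $\sum_{k=n+1}^{\infty}k^{-r}<n^{-r}=\psi(n)$; that is, the admissibility hypothesis~\eqref{2} is satisfied by $\psi(k)=k^{-r}$.

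With \eqref{2} in force I would apply \eqref{12} with $\psi(n)=n^{-r}$. In the lower bound of \eqref{12} the tail enters with a minus sign, so replacing $\sum_{k=n+1}^{\infty}k^{-r}$ by the larger quantity $n^{-r}(2+1/n)(1+1/n)^{-r}$ only weakens it to $\frac{\|\cos t\|_{p'}}{\pi}n^{-r}\bigl(1-(2+1/n)(1+1/n)^{-r}\bigr)$; in the upper bound the tail enters with a plus sign, so the same replacement gives $\frac{\|\cos t\|_{p'}}{\pi}n^{-r}\bigl(1+(2+1/n)(1+1/n)^{-r}\bigr)$. Together with $E_{n}\le{\cal E}_{n}$ this is exactly the chain~\eqref{19}. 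There is essentially no obstacle here: the argument is bookkeeping around a single integral comparison. The only points that need a word of care are that Theorem~\ref{2t} is formally stated for $p>1$, so the endpoint $p=1$ must be routed through Theorem~\ref{1.5t}, \eqref{15_5} and \eqref{16}, and that hypothesis~\eqref{18} is used for nothing other than securing the admissibility condition~\eqref{2} (equivalently, for keeping the resulting lower bound nonnegative).
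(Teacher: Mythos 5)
Your proposal is correct and follows essentially the same route as the paper: verify condition \eqref{2} for $\psi(k)=k^{-r}$ via the tail estimate $\sum_{k=n+1}^{\infty}k^{-r}\le(n+1)^{-r}\bigl(1+\frac{n+1}{r-1}\bigr)\le n^{-r}(2+\frac1n)(1+\frac1n)^{-r}$ under $r\ge n+1$ and \eqref{18}, then substitute into the chain \eqref{12}. Your explicit handling of the endpoint $p=1$ via Theorem~\ref{1.5t}, \eqref{15_5} and \eqref{16} is a small point the paper glosses over (it invokes Theorem~\ref{2t}, formally stated only for $p>1$), and is a welcome clarification.
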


\begin{proof} Put  $\psi(k)=k^{-r}, r>1,$ and make sure that the condition \eqref{2}  is folowed from the inequality \eqref{18}.

Since for arbitrary   $n\in\mathbb{N}$ and $r\ge n+1$
\begin{equation}\label{20}
\sum_{k=n+1}^\infty\frac1{k^r}<\frac1{(n+1)^r}+\int\limits_{n+1}^\infty\frac{dt}{t^r}=\frac1{(n+1)^r}+\frac1{(r-1)(n+1)^{r-1}}=
$$
$$
=\frac1{(n+1)^r}\frac{r+n}{r-1знам}\le\frac1{n^r}\frac1{(1+\frac1n)^r}\frac{2r-1}{r-1знам}
\le\frac1{n^r}\frac1{(1+\frac1n)^r}\left(2+\frac1n\right),
\end{equation}
then  under the condition \eqref{18} we obtain 
$$
\sum_{k=n+1}^\infty\frac1{k^r}<\frac1{n^r}.
$$
Then, applying Theorem 3 for $\psi(k)=k^{-r}$, from (\ref{12}) and \eqref{20} we obtain \eqref{19}. \end{proof}

Note that if the condition
\begin{equation}\label{21}
\lim\limits_{n\rightarrow\infty}\frac rn=\infty,
\end{equation}
is satisfied, then the condition \eqref{18} is also satisfied for sufficiently large $n$, because
$$
\left(1+\frac1n\right)^{-r}=\left(\left(1+\frac1n\right)^{n+1}\right)^{-\frac{r}{n+1}}\le e^{-\frac r{n+1}}\rightarrow0,\quad n\rightarrow\infty.
$$
And therefore
\begin{equation*}
\frac{2+\frac1n}{\left(1+\frac1n\right)^r}\rightarrow0 \quad \mbox{при}\quad  n\rightarrow\infty.
\end{equation*}
Taking the limit as  $n\rightarrow\infty$ in the inequalities \eqref{19}, we obtain asymptotic equalities for the quantities $E_{n}(W_{\bar\beta,p}^r)_{C}$ and ${\cal E}_{n}(W_{\bar\beta,p}^r)_{C}$.

\begin{theorem}\label{4t}
	Let $1 \le p \le \infty,$  $r>1$, $n\in\mathbb{N}$  and $\{\beta_k\}_{k=1}^\infty$ be an arbitrary sequence of 	real numbers. If the condition \eqref{21} is satisfied, then the following asymptotic equalities hold
	\begin{equation}\label{14_4t}
	\left.\begin{array}{c}
	{\cal E}_{n}(W_{\bar\beta,p}^r)_{C}\\ E_{n}(W_{\bar\beta,p}^r)_{C}
	\end{array}\right\}=\frac1{n^r}
	\left(\frac{\|\cos t\|_{p'}}{\pi}+{\cal O}(1)\left(1+\frac1n\right)^{-r}\right),
	\end{equation}
	where 	$\displaystyle\frac1p+\frac1{p'}=1$ and ${\cal O}(1)$ are the quantities uniformly bounded in all parameters.
\end{theorem}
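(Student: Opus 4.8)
The plan is to deduce the asymptotic equalities \eqref{14_4t} directly from the two-sided bounds \eqref{19} of Theorem~\ref{3t}, with no new work beyond bookkeeping. First I would check that condition \eqref{21} makes Theorem~\ref{3t} applicable for all sufficiently large $n$. Indeed, exactly as in the computation carried out just before the statement,
$$
\left(1+\frac1n\right)^{-r}=\left(\left(1+\frac1n\right)^{n+1}\right)^{-\frac{r}{n+1}}\le e^{-\frac{r}{n+1}},
$$
and under \eqref{21} the right-hand side tends to $0$ as $n\to\infty$. Hence for all large $n$ both the restriction $r\ge n+1$ and the inequality \eqref{18} are satisfied, so \eqref{19} holds.

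Next I would rewrite the outer members of \eqref{19}. For every $n\in\mathbb{N}$ one has $2+\frac1n\le 3$, so
$$
\frac{\|\cos t\|_{p'}}{\pi}\,n^{-r}\,\frac{2+\frac1n}{\left(1+\frac1n\right)^r}
={\cal O}(1)\,n^{-r}\left(1+\frac1n\right)^{-r},
$$
where the ${\cal O}(1)$ is uniform in $n$, $r$ and $\bar\beta$, and also in $p$, since $\|\cos t\|_{p'}\le(2\pi)^{1/p'}\le 2\pi$ for all $p'\in[1,\infty]$. Therefore each of the lower and upper bounds in \eqref{19} takes the form
$$
\frac1{n^r}\left(\frac{\|\cos t\|_{p'}}{\pi}+{\cal O}(1)\left(1+\frac1n\right)^{-r}\right),
$$
and since, by \eqref{15_5} and Theorem~\ref{3t}, both $E_{n}(W_{\bar\beta,p}^r)_{C}$ and ${\cal E}_{n}(W_{\bar\beta,p}^r)_{C}$ lie between these two bounds, the equalities \eqref{14_4t} follow. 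If in addition one wishes to read off the plain asymptotic relation $A(n)\sim B(n)$, it suffices to observe that $\left(1+\frac1n\right)^{-r}\to0$ under \eqref{21}, so the term $\frac{\|\cos t\|_{p'}}{\pi}$ dominates the remainder.

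There is essentially no analytic obstacle here; the proof is a corollary-style rearrangement. The only points that require a little care are (i) confirming that \eqref{21} forces \eqref{18} and $r\ge n+1$ for large $n$, which is the short estimate displayed above, and (ii) tracking the uniformity of the constant hidden in ${\cal O}(1)$ — in particular the uniform boundedness of $\|\cos t\|_{p'}$ over $p'\in[1,\infty]$ — so that the assertion "uniformly bounded in all parameters" is genuinely justified.
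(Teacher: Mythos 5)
Your proposal is correct and follows essentially the same route as the paper: the paper derives Theorem~\ref{4t} directly from the two-sided bounds \eqref{19} of Theorem~\ref{3t}, using exactly the estimate $\left(1+\frac1n\right)^{-r}\le e^{-r/(n+1)}\to0$ to show that \eqref{21} guarantees \eqref{18} (and $r\ge n+1$) for all sufficiently large $n$. Your additional bookkeeping on the uniformity of the ${\cal O}(1)$ constant is a welcome explicit touch but does not change the argument.
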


The asymptotic equality \eqref{14_4t} for the quantities ${\cal E}_{n}(C_{\bar\beta,p}^\psi)_{C}$ is established in  \cite{Serdyuk_Sokolenko2019}.

As it showed in \cite[P. 163-164]{Stepanets1987}
\begin{equation}\label{24}
\sum_{k=n+1}^{\infty}e^{-\alpha k^r}<e^{-\alpha n^r}\left(1+\frac1{\alpha r n^{r-1}}\right)e^{-\alpha r n^{r-1}}, \quad r>1, \alpha>0, n\in\mathbb{N}.
\end{equation}

So, if the condition
\begin{equation}\label{25_25}
\left(1+\frac1{\alpha r n^{r-1}}\right)e^{-\alpha r n^{r-1}}<1, \quad r>1, \alpha>0, 
\end{equation}
is satysfied, then
$$
\sum_{k=n+1}^{\infty}e^{-\alpha k^r}<e^{-\alpha n^r},
$$
and therefore from Theorem 3 for the classes $C_{\bar\beta,p}^{\alpha,r}$ we obtain the following statement.

\begin{theorem}\label{5t}
	Let $1 \le p \le \infty,$ $r>1,$ $\alpha>0,$ $n\in\mathbb{N}$ and $\{\beta_k\}_{k=1}^\infty$   be an arbitrary sequence of real numbers. If the inequality \eqref{25_25} is satisfied, then the following relations hold
	\begin{equation*}\label{26}
	\frac{\|\cos t\|_{p'}}{\pi} e^{-\alpha n^r}\left(1-\left(1+\frac1{\alpha r n^{r-1}}\right)e^{-\alpha r n^{r-1}}\right)	\le E_{n}(C_{\bar\beta,p}^{\alpha,r})_{C}\le
	$$
	$$
	\le {\cal E}_{n}(C_{\bar\beta,p}^{\alpha,r})_{C}\le
	\frac{\|\cos t\|_{p'}}{\pi} e^{-\alpha n^r}\left(1+\left(1+\frac1{\alpha r n^{r-1}}\right)e^{-\alpha r n^{r-1}}\right),
	\end{equation*}
	where $\displaystyle\frac1p+\frac1{p'}=1.$\\
\noindent	As $n\rightarrow\infty$ the following asymptotic equalities hold	
	\begin{equation}\label{14_5t}
\!\!\!\!	\left.\begin{array}{c}
	{\cal E}_{n}(C_{\bar\beta,p}^{\alpha,r})_{C}\\
	E_{n}(C_{\bar\beta,p}^{\alpha,r})_{C}
	\end{array}\right\}{=}
	e^{{-}\alpha n^r}\left(\frac{\|\cos t\|_{p'}}{\pi}{+}{\cal O}(1)\left(1{+}\left(1{+}\frac1{\alpha r n^{r{-}1}}\right)e^{{-}\alpha r n^{r-1}}\right)\right),
	\end{equation}
	where ${\cal O}(1)$ are the quantities uniformly bounded in all parameters.
\end{theorem}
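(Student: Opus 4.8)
The plan is to obtain Theorem~\ref{5t} as a direct specialization of Theorem~\ref{2t} (supplemented by Theorem~\ref{1.5t} at the endpoint $p=1$) to the sequence $\psi(k)=e^{-\alpha k^r}$, using the tail bound \eqref{24}.

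First I would put $\psi(k)=e^{-\alpha k^r}$, so that $C^\psi_{\bar\beta,p}=C^{\alpha,r}_{\bar\beta,p}$ and $\psi(n)=e^{-\alpha n^r}$, and verify the hypothesis \eqref{2} of Theorem~\ref{2t}. By \eqref{24} one has $\sum_{k=n+1}^\infty e^{-\alpha k^r}<e^{-\alpha n^r}\big(1+\tfrac1{\alpha r n^{r-1}}\big)e^{-\alpha r n^{r-1}}$, and the factor $\big(1+\tfrac1{\alpha r n^{r-1}}\big)e^{-\alpha r n^{r-1}}$ is $<1$ exactly when \eqref{25_25} holds; hence under \eqref{25_25} we get $\sum_{k=n+1}^\infty\psi(k)<\psi(n)$, i.e.\ \eqref{2} is satisfied.

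Next I would apply the two-sided chain \eqref{12} of Theorem~\ref{2t}. It is stated for $1<p\le\infty$; for $p=1$ the lower bound is provided by Theorem~\ref{1.5t} and the upper bound by \eqref{16} (with $\|\cos t\|_\infty=1$), so \eqref{12} is in force on the whole range $1\le p\le\infty$. Substituting the estimate \eqref{24} for the tail $\sum_{k=n+1}^\infty\psi(k)$ into both ends of \eqref{12} and pulling out the factor $e^{-\alpha n^r}$ gives precisely the displayed two-sided inequalities of Theorem~\ref{5t}, with $g(n):=\big(1+\tfrac1{\alpha r n^{r-1}}\big)e^{-\alpha r n^{r-1}}$ playing the role of the $\pm$ term.

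For the asymptotic equalities \eqref{14_5t} I would note that $r>1$ forces $\alpha r n^{r-1}\to\infty$, hence $g(n)\to0$; in particular \eqref{25_25} holds for all sufficiently large $n$, so the two-sided estimate applies and can be rewritten as $E_n(C^{\alpha,r}_{\bar\beta,p})_C={\cal E}_n(C^{\alpha,r}_{\bar\beta,p})_C+\ldots=e^{-\alpha n^r}\big(\tfrac{\|\cos t\|_{p'}}\pi+\theta_n\tfrac{\|\cos t\|_{p'}}\pi g(n)\big)$ with $|\theta_n|\le1$, which is \eqref{14_5t} after absorbing $\tfrac{\|\cos t\|_{p'}}\pi\theta_n$ and the bound $g(n)\le 1+g(n)$ into an ${\cal O}(1)$. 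There is no genuinely hard step here; the only points needing a little care are handling $p=1$ separately (where Theorem~\ref{2t} is not stated) and checking that the resulting ${\cal O}(1)$ is uniform in all of $p,r,\alpha,n,\bar\beta$, which comes down to the uniform boundedness of $\|\cos t\|_{p'}/\pi$ (e.g.\ by $4/\pi$) and of $|\theta_n|$.
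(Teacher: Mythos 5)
Your proposal is correct and follows essentially the same route as the paper: the authors likewise specialize Theorem~\ref{2t} to $\psi(k)=e^{-\alpha k^r}$, using the tail bound \eqref{24} to verify that \eqref{25_25} implies condition \eqref{2}, and then read off both the two-sided estimate and the asymptotics. Your extra remarks on covering $p=1$ via Theorem~\ref{1.5t} and on the uniformity of the ${\cal O}(1)$ are consistent with how the paper treats these points.
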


Note that for $p=\infty$ the asymptotic equality \eqref{14_5t} for the quantities ${\cal E}_{n}(C_{\bar\beta,p}^{\alpha,r})_{C}$ was obtained by Stepanets \cite{Stepanets1987} and for $1\le p<\infty$  in  \cite{Serdyuk_2005_8}.

\bigskip

CONTACT INFORMATION

\medskip
A.S.~Serdyuk\\01024, Ukraine, Kiev-4, 3, Tereschenkivska st.\\
serdyuk@imath.kiev.ua

\medskip
I.V.~Sokolenko\\01024, Ukraine, Kiev-4, 3, Tereschenkivska st.\\
sokol@imath.kiev.ua

\begin{thebibliography}{99}

\bibitem{Akhiezer_Krein1937}  N.I. Akhiezer, M. G. Krein, 
\textit{On the best approximation of differentiable periodic functions by trigonometric
	polynomials} // Dokl. Akad. Nauk SSSR, 15, No. 3 (1937), 107–112.

\bibitem{Baraboshkina2011} N. A. Baraboshkina, \textit{L-approximation of a linear combination of the Poisson kernel and its conjugate kernel by trigonometric polynomials} // Proc. Steklov Inst. Math., 273, 1 (2011),  59–67. 

\bibitem{Bushanskij} A.V. Bushanskii,  \textit{Best harmonic approximation in the mean of certain functions} // Studies in the theory of approximation of functions and their applications, Akad. Nauk Ukrain. SSR, Inst. Mat., Kiev, 1978, 29–37. 

\bibitem{Dzyadyk_1959}  V.K. Dzyadyk, \textit{On the best approximation on the classes of periodic functions determined by kernels which are integrals of absolutely monotone functions} // Izv. Akad.
Nauk SSSR, Ser. Mat., 23, No. 6 (1959), 933–950.

\bibitem{Dzyadyk_1974} V. K. Dzyadyk, \textit{Best approximation on classes of periodic functions defined by integrals of linear combinations of absolutely monotonous kernels} // Mat. Zametki,16, No. 5  (1974), 691–701. 

\bibitem{Efimov1960} 
A. V. Efimov, \textit{Approximation of continuous periodic functions by Fourier sums} // Izv. Akad. Nauk SSSR, Ser. Mat., 24  (1960), 243–296.

\bibitem{Favard_1936} J. Favard, \textit{Sur l'approximation
	des fonctions p$\acute e$riodiques par des polynomes
	trigonom$\acute e$triques} // C.R.Acad. Sci., 203 (1936), 1122--1124.

\bibitem{Favard_1937} J. Favard, \textit{Sur les meilleurs
	proc$\acute e$des d'approximations de certains classes de fontions
	par des polynomes trigonometriques} // Bull. de Sci. Math., 61 (1937), 209--224, 243 -- 256.


\bibitem{Kolmogorov1985} A. N. Kolmogorov, \textit{On the order of the remainders of the Fourier series of differentiable functions}// A.N. Kolmogorov, \textit{Selected Works. Mathematics and Mechanics}, Nauka, Moscow (1985), 179–185.

\bibitem{Kornejchuk1987} N.P.  Korneichuk, \textit{Exact constants in approximation theory}, Encyclopedia of Mathematics and its Applications, 38. Cambridge University Press, Cambridge, 1991,  xii+452 pp. 

\bibitem{Krein_1938} M. G. Krein, \textit{The theory of best approximation of periodic functions} // Dokl. Akad. Nauk SSSR, 18, Nos. 4–5 (1938), 245–249.

\bibitem{Sz.-Nagy1938} B. Sz.-Nagy, \textit{${\rm \ddot{U}}$ber gewisse Extremalfragen bei transformierten trigonometrischen Entwicklungen. 1. Periodischer Fall} // Ber. Math.-Phys. Kl. Akad. Wiss., Leipzig, 90  (1938), 103–134.

\bibitem{Nikol'skii1941} S. M. Nikol'skii, \textit{An asymptotic estimation of the remainder under approximation by Fourier sums} // Dokl. Akad. Nauk SSSR, 32 (1941), 386–389.

\bibitem{Nikol'skii1946} 
S. M. Nikol'skii,  \textit{Approximation of functions in the mean by trigonometric polynomials} // Izv. Akad. Nauk SSSR, Ser. Mat., 10 (1946), 207–256.

\bibitem{Pinkevich1940} 
V. T. Pinkevich, \textit{On the order of the remainders of the Fourier series of functions differentiable in the sense of Weyl} // Izv. Akad. Nauk SSSR, Ser. Mat., 4  (1940), 521–528.

\bibitem{Pinkus1985} A. Pinkus, \textit{n-widths in approximation theory}, Springer-Verlag, Berlin, 1985. x+291.

\bibitem{Serdyuk1995} A.S. Serdyuk, \textit{On the best approximation of classes of convolutions of periodic functions by trigonometric polynomials} // 
Ukrainian Math. J. 47 (1995), no. 9, 1435–1440.

\bibitem{Serdyuk1998} A.S. Serdyuk, \textit{Estimates for the widths and best approximations of classes of convolutions of periodic functions} // Fourier series: theory and applications  (Kamenets-Podol'skii, 1997), Pr. Inst. Mat. Nats. Akad. Nauk Ukr. Mat. Zastos., 20 (1998), 286–299. 

\bibitem{Serdyuk1999}  A.S. Serdyuk, \textit{Widths and best approximations for classes of convolutions of periodic functions}// 
Ukrainian Math. J. 51, no. 5  (1999), 748–763.

\bibitem{Serdyuk_2002_zb} A.S. Serdyuk, \textit{On best approximation in classes of convolutions of periodic functions} // Theory of the approximation of functions and related problems, Pr. Inst. Mat. Nats. Akad. Nauk Ukr. Mat. Zastos., 35 (2002), 172–194.

\bibitem{Serdyuk_2004} A.S. Serdyuk, \textit{On one linear method of approximation of periodic functions} // Zb. Pr. Inst. Mat. NAN Ukr. 1, No. 1  (2004), 294-336.

\bibitem{Serdyuk_2005_7} A.S. Serdyuk, \textit{Best approximations and widths of classes of convolutions of periodic functions of high smoothness} //
Ukrainian Math. J. 57 (2005), no. 7, 1120–1148. 

\bibitem{Serdyuk_2005_8} A.S. Serdyuk, \textit{Approximation of classes of analytic functions by Fourier sums in the uniform metric} // 
Ukrainian Math. J. 57 (2005), no. 8, 1275–1296.

\bibitem{Serdyuk_Sokolenko2011} A.S. Serdyuk, I.V. Sokolenko, \textit{Asymptotic behavior of best approximations of classes of Poisson integrals of functions from $H_\omega$} // Journal of Approximation Theory, 163, № 11 (2011), 1692-1706.

\bibitem{Serdyuk_Sokolenko2016} A.S. Serdyuk, I.V. Sokolenko, \textit{Asymptotic equalities for best approximations for classes of infinitely differentiable functions defined by the modulus of continuity} // 
Math. Notes 99 (2016), no. 5-6, 901–915.	

\bibitem{Serdyuk_Sokolenko2019}	A.S. Serdyuk, I.V. Sokolenko,  \textit{Approximation by Fourier sums in classes of differentiable functions with high exponents of smoothness} // Methods of Functional Analysis and Topology. Vol. 25 (2019), №4, 381-387.

\bibitem{Serdyuk_Stepanyuk2019} A.S. Serdyuk,  T.A. Stepanyuk,  \textit{Uniform Approximations by Fourier Sums in Classes of Generalized Poisson Integrals} // Analysis Math., 45 (1) (2019), 201–236.	

\bibitem{Shevaldin_1992} V.T. Shevaldin, \textit{Widths of classes of convolutions with Poisson kernel} // 
Math. Notes, 51:6 (1992), 611–617. 

\bibitem{Stechkin1956_2} 
S. B. Stechkin, \textit{On the best approximation of certain classes of periodic functions by
	trigonometric polynomials} // Izv. Akad. Nauk SSSR, Ser. Mat., 20, No. 6 (1956), 643–648.


\bibitem{Stechkin1980} 
S. B. Stechkin, \textit{An estimation of the remainders of the Fourier series of differentiable functions} // Tr. Mat. Inst. Akad. Nauk SSSR, 145 (1980), 126–151.	

\bibitem{Stepanets1987} A. I. Stepanets,
\textit{Classification and Approximation of Periodic Functions},
Kluwer Academic Publishers,
Dordrecht, 	1995.  

\bibitem{Stepanets2005}
A.I. Stepanets, \textit{Methods of Approximation Theory}, Utrecht: VSP (2005).

\bibitem{Serdyuk_Stepanets2000_03} A.I. Stepanets, A.S. Serdyuk,  \textit{Approximation by Fourier sums and best approximations on classes of analytic functions} // 
Ukrainian Math. J. 52 (2000), no. 3, 433–456.

\bibitem{Sun_1961} Sun Yong-sheng, \textit{On the best approximation of periodic differentiable functions by trigonometric polynomials. II} // Izv. Akad. Nauk SSSR Ser. Mat., 25:1 (1961),  143–152.		

\bibitem{Telyakovskii1961} 
S. A. Telyakovskii, \textit{On the norms of trigonometric polynomials and approximation of differentiable functions by the linear means of their Fourier series} // Tr. Mat. Inst. Akad. Nauk SSSR, 62 (1961), 61–97.

\bibitem{Telyakovskii1989} S.A. Telyakovskii, \textit{Approximation by Fourier sums of functions of high smoothness} // 
Ukrainian Math. J. 41 (1989), no. 4, 444–451.

\bibitem{Telyakovskii1994} S.A. Telyakovskii, \textit{On approximation by Fourier sums of differentiable functions of high smoothness} // 
Proc. Steklov Inst. Math. 1994, 1 (198), 183–201.

\end{thebibliography}
\end{document}